\theoremstyle{definition}
\newtheorem{theorem}{Theorem}
\newtheorem{lemma}{Lemma}
\theoremstyle{remark}
\begin{document}

\title{On Whitney Embedding of O-minimal Manifolds}

\author[R. Bianconi]{Ricardo Bianconi}
\address{Departamento de Matem\'atica, Instituto de Matem\'atica e Estat\'is\-tica, Universidade de S\~ao Paulo, Rua do Mat\~ao 1010, CEP 05508-090, S\~ao Paulo, SP, BRAZIL.}
\email{bianconi@ime.usp.br}

\author[R. Figueiredo]{Rodrigo Figueiredo}
\thanks{The second author was supported by the research grant by CNPq Brazil, Proc. No. 141829/2014-1.}
\address{Departamento de Matem\'atica, Instituto de Matem\'atica e Estat\'is\-tica, Universidade de S\~ao Paulo, Rua do Mat\~ao 1010, CEP 05508-090, S\~ao Paulo, SP, BRAZIL.}
\email{rodrigof@ime.usp.br}

\author[R. A. Figueiredo]{Robson A. Figueiredo}
\address{Departamento de Matem\'atica, Instituto de Matem\'atica e Estat\'is\-tica, Universidade de S\~ao Paulo, Rua do Mat\~ao 1010, CEP 05508-090, S\~ao Paulo, SP, BRAZIL.}
\email{robs@ime.usp.br}

\begin{abstract}
We prove a definable version of the Whitney embedding theorem for abstract-definable $\mathcal{C}^p$ manifolds with $1\leq p<\infty$, namely: every abstract-definable $\mathcal{C}^p$ manifold is abstract-definable $C^p$ embedded into $R^N$, for some positive integer $N$. As a consequence, we show that every abstract-definable $\mathcal{C}^p$ manifold has a compatible $\mathcal{C}^{p+1}$ atlas.
\end{abstract}

\keywords{o-minimal, abstract definable manifolds, embedding, smoothing}
\subjclass[2010]{03C64, 12D15, 14Pxx}

\maketitle

\section{Introduction} 

O-minimal structures generalize the notion of semialgebraic sets and have been very successful recently in their applications, mainly in arithmetic geometry, \cite{pila2011,pila-wilkie2006,gao2017,klinger-ullmo-yafaev2016}.

We deal here with Whitney's Embedding Theorem for manifolds definable in the context of o-minimal expansions of a real closed field.

In the mid-1980s, M. Shiota introduced the notion of an abstract Nash manifold of class $\mathcal{C}^p$, \cite{shiota1986}, and proved that every abstract Nash manifold of class $\mathcal{C}^p$, with $1\leq p<\infty$, can be $\mathcal{C}^p$ Nash embedded into some Euclidean space. Roughly speaking, an abstract Nash manifold of class $\mathcal{C}^p$ is a topological manifold equipped with a finite atlas, whose transitions maps are Nash $\mathcal{C}^p$ diffeomorphisms. The method M. Shiota used to prove his embedding theorem differs from that usually employed in the proof of  Whitney's Embedding Theorem (see Theorem 6.15, \cite{johnmlee}), since the underlying field is not necessarily Archimedean, and it can be adjusted to the o-minimal setting. In this direction, T. Kawakami shows, in our parlance, that every $n$-dimensional abstract definable $\mathcal{C}^p$ manifold, with $2\leq p<\infty$, is abstract-definably $\mathcal{C}^p$ embeddable into $\mathbb{R}^{2n+1}$, where the fixed structure is an o-minimal expansion of the real field. He obtains such an embedding by means of the fact that every definable $\mathcal{C}^p$ function can be approximated by an injective definable $\mathcal{C}^p$ immersion in the $\mathcal{C}^p$ topology and the fact that every affine abstract definable $\mathcal{C}^p$ manifold is either compact or abstract-definably $\mathcal{C}^p$ diffeomorphic to the interior of some compact abstract definable $\mathcal{C}^p$ manifold with boundary. With a fixed o-minimal expansion of a real closed field as the underlying structure, we prove in the present work that every abstract-definable $\mathcal{C}^p$ manifold can be abstract-definably $\mathcal{C}^p$ embedded into some Euclidean space, where $1\leq p<\infty$ (Theorem \ref{wet}), by following \cite{shiota1986} and \cite{kawakami2002}. It is worth mentioning that, in the same setting as ours, A. Berarducci and M. Otero established a Whitney's Embedding Theorem for the case of definably compact abstract definable $\mathcal{C}^p$ manifolds (Theorem 10.7, \cite{berarducci-otero2001}). Hence, the first main result of this paper (Theorem \ref{wet}) generalizes all of these cited previous works.

By virtue of the Whitney's Embedding Theorem established in the first part of the paper for the category of abstract definable $\mathcal{C}^p$ manifolds, whose fixed structure is an o-minimal expansion of a real closed field $R$, we can view an abstract definable $\mathcal{C}^p$ manifold as a definable $\mathcal{C}^p$ submanifold of some $R^N$. We then use a theorem on smoothing definable submanifolds by J. Escribano (Theorem 1.11, \cite{escribano2002}) to obtain our second main result of this paper (Theorem \ref{smo}). Theorem \ref{smo} has particular interest for the de Rham cohomology of abstract-definable manifolds, since it allows us to construct cochain complex of abstract definable $\mathcal{C}^p$ forms, and thereby establish a de Rham-like cohomology for abstract-definable $\mathcal{C}^p$ manifolds, with $1\leq p<\infty$, just like it has been settled in \cite{figueiredo2017} for the category of abstract definable $\mathcal{C}^\infty$ manifolds, whose underlying o-minimal structure has the additional assumption of admitting smooth cell decomposition.

\section{Preliminaries} 

We recall some definitions and facts.
\\

An \textit{o-minimal expansion $\mathcal{R}$ of a real closed field} $R$ is a family  $\{\mathcal{R}_n\}_{n\in\mathbb{N}}$ of subsets of $R^n$, such that
\begin{enumerate}
\item
each $\mathcal{R}_n$ is a Boolean algebra of subsets of $R\sp{n}$;
\item
$R\in \mathcal{R}_1$, and
the graphs of the sum and the product of $R$ belong to $\mathcal{R}_3$;
\item
if $A\in \mathcal{R}_n$ and $B\in \mathcal{R}_m$ then $A\times B\in \mathcal{R}_{n+m}$;
\item
if $T:R\sp{m}\to R\sp{n}$ is an $R$-linear transformation, and $A\in \mathcal{R}_m$, then $T(A)\in \mathcal{R}_n$;
\item
(o-minimality) the only sets in $\mathcal{R}_1$ are the unions of finitely many points and open intervals with endpoints in $R\cup\{\pm\infty\}$.
\end{enumerate}
We say that a subset $A\subseteq R^n$ is \textit{definable} (in $\mathcal{R}$) if $A\in \mathcal{R}_n$. A map $f\colon A\to B$, with $A\subseteq R^m$ and $B\subseteq R^n$, is called \textit{definable} (in $\mathcal{R}$) if its graph is definable. A subset $X\subseteq R^n$ is said to be \textit{definable in $\mathcal{R}$ with parameters in $A\subseteq R$} if $X$ is a fiber $Y_a\mathrel{\mathop:}=\{x\in R^n\,:\, (x,a)\in Y\}$ of a definable set $Y\subseteq R^{n+m}$ above the $m$-tuple $a\in A^m$.   

We refer the reader to \cite{vddries98} and \cite{vandendries-miller1996} for a thorough introduction to o-minimal structures.

Throughout the paper, $\mathcal{R}$ denotes a fixed but arbitrary o-minimal expansion of a real closed field $R$, and by ``definable'' we mean ``definable in $\mathcal{R}$ with parameters in $R$'', unless otherwise stated. 

Let $M$ be a set, and let $\{\phi_i\colon U_i\to \phi_i(U_i)\subseteq R^m\}_{i\in I}$ be a finite family of set-theoretic bijections, where each $U_i$ is a subset of $M$ and $\phi_i(U_i)$ is a definable open set in $R^m$. Such a collection is said to be an \textit{abstract-definable $\mathcal{C}^p$ atlas} on $M$ of dimension $m$, where $0\leq p<\infty$, if $M=\bigcup_{i\in I} U_i$ and for any $i,j\in I$ the sets  $\phi_i(U_i\cap U_j), \phi_j(U_i\cap U_j)$ are definable and open in $R^m$, and the \textit{transition maps} $\phi_j\circ \phi_i^{-1}\colon \phi_i(U_i\cap U_j) \to \phi_j(U_i\cap U_j)$ are definable $\mathcal{C}^p$ diffeomorphisms.

The relation $\sim$ defined on the set of all abstract-definable $\mathcal{C}^p$ atlases of dimension $m$ on a set $M$, given by $\mathcal{A}\sim \mathcal{B}$ if and only if $\mathcal{A}\cup \mathcal{B}$ is an abstract-definable $\mathcal{C}^p$ atlas on $M$, is an equivalence relation. In this case, we say that $\mathcal{A}$ and $\mathcal{B}$ are \textit{compatible}.

Any abstract-definable $\mathcal{C}^p$ atlas $\{\phi_i\colon U_i\to \phi_i(U_i)\subseteq R^m\}_{i\in I}$ on a set $M$ endows such a set with a topology whose open sets are those subsets $U\subseteq M$ such that $\phi_i(U_i\cap U)$ are open in $R^m$ for all $i\in I$. This is the unique topology on $M$ in which each $U_i$ is open and every $\phi_i$ is a homeomorphism. Two equivalent abstract-definable $\mathcal{C}^p$ atlases on a set induce the same topology, the \textit{manifold topology}. The manifold topology is $T_1$, although is not Hausdorff.

An \textit{abstract-definable $\mathcal{C}^p$ manifold} of dimension $m$ is a set $M$ together with a $\sim$-equivalence class of $m$-dimensional abstract-definable $\mathcal{C}^p$ atlases on $M$, whose manifold topology is Hausdorff. 

Let $(M,\mathcal{A})$ and $(N,\mathcal{B})$ be two abstract-definable $\mathcal{C}^p$ manifolds. A subset $A\subseteq M$ is called an \textit{abstract-definable set} in $M$ if $\phi(U\cap A)$ is definable for every chart $(U,\phi)$ in $\mathcal{A}$. A map $f\colon M\to N$ is said to be \textit{abstract-definable} (resp., \textit{abstract-definable $\mathcal{C}^p$}, an \textit{abstract-definable $\mathcal{C}^p$ diffeomorphism}) if for every point $x\in M$ and any charts $(U,\phi)\in \mathcal{A}$, $(V,\psi)\in \mathcal{B}$ with $x\in U$ and $f(x)\in V$ the map 
$$
\psi\circ f\circ \phi^{-1}|_{\phi(U\cap f^{-1}(V))}\colon \phi(U\cap f^{-1}(V))\to \psi(f(U)\cap V)
$$ 
is definable (resp., a $\mathcal{C}^p$ map, a definable $\mathcal{C}^p$ diffeomorphism). (See \cite{vddries98}, pp. 114-116, for the notion of a $\mathcal{C}^p$ map.) The set of all abstract-definable open sets in $M$ forms a basis for the manifold topology. 

Fix $x\in M$ and consider the set $\mathcal{C}^p(x)$ of all abstract-definable $\mathcal{C}^p$ maps $\alpha\colon I\to M$, where $I$ is an open interval containing $0$, such that $\alpha(0)=x$, on which we have the equivalence relation 
$$
\alpha_1\sim \alpha_2\Leftrightarrow (\phi\circ \alpha_1)'(0)=(\phi\circ \alpha_2)'(0),
$$
for some chart $(U,\phi)$ on $M$ at $x$. By virtue of the chain rule, we may replace the condition ``for some chart on $M$ at $x$'' with ``for any chart on $M$ at $x$'' in the definition  of $\sim$. The quotient set $\mathcal{C}^p(x)/\sim$ is denoted by $T_x M$.  

If $(U,\phi)$ is a chart on $M$ at $x$, the induced map $\Phi_x\colon T_x M\to R^m$ defined as $[\alpha]\mapsto (\phi\circ \alpha)'(0)$ is bijective, and hence there is a unique $R$-vector space structure on $T_x M$ which makes $\Phi_x$ into a linear isomorphism, namely: $v+w=\Phi_x^{-1}(\Phi_x(v)+\Phi_x(w))$ and $rv=\Phi_x^{-1}(r\Phi_x(v))$, for $v,w\in T_x M, r\in R$. These operations are independent of the choice of $(U,\phi)$. The set $T_x M$ together with such a linear structure is called \textit{tangent space} to $M$ at $x$ and its elements are the \textit{tangent vectors} to $M$ at $x$.

An abstract-definable $\mathcal{C}^p$ map $f\colon M\to N$ induces at each point $x\in M$ a linear map $d_x f\colon T_x M\to T_{f(x)}N$, the \textit{differential} of $f$ at $x$, by setting $d_x f ([\alpha])=[f\circ \alpha]$. Under the identification $T_x R^m\equiv R^m$, we obtain $d_x \phi=\Phi_x$.

Given a chart $(U,\phi)$ at a point $x\in M$, the set $\{\partial/\partial x^1|_x,\ldots,\partial/\partial x^m|_x\}$ forms a basis for $T_x M$, where $\partial /\partial x^i|_x$ is $(d_x \phi)^{-1}(e_i)$ and $e_i$ denotes the $i$th standard basis vector $(0,\ldots,1,\ldots,0)$ of $R^m$. Hence, a tangent vector $X_x\in T_x M$ can be uniquely written as $X_x=\sum_{i=1}^ma_i(\partial/\partial x^i|_x)$, with $(a_1,\ldots,a_m)\in R^m$. If $X_x=[\alpha]$, for some $\alpha\in \mathcal{C}^p(x)$, then $(a_1,\ldots,a_m)=(\phi\circ \alpha)'(0)$. 

Let $f\colon M\to R$ be an abstract-definable $\mathcal{C}^p$ function. The \textit{directional derivative} $X_x f$ of $f$ at $x\in M$ is defined to be $(f\circ \alpha)'(0)$. If $(U,\phi)$ is a chart at $x$, then applying the chain rule to $(f\circ \phi^{-1}\circ \phi\circ \alpha)'(0)$ we get  $X_x f=\sum_{i=1}^ma_i(\partial(f\circ \phi^{-1})/\partial r^i) (\phi(x))$, where $a_i$ are the components of $X_x$ in the basis $\{\partial/\partial x^i|_x\}_i$. Particularly, $(\partial/\partial x^i|_x)f=(\partial(f\circ \phi^{-1})/\partial r^i)(\phi(x))$.

A map $f\colon M\to N$ between abstract definable $\mathcal{C}^p$ manifolds is said to be an \textit{abstract-definable $\mathcal{C}^p$ immersion} if for each $x\in M$ the differential $d_x f\colon T_x M\to T_{f(x)} N$ of $f$ at $x$ is injective. If, in addition, $f$ is a homeomorphism onto its image then it is called an \textit{abstract-definable $\mathcal{C}^p$ embedding}.

\section{Embedding of Abstract Definable Manifolds} 

\begin{theorem}\label{wet}
Any abstract definable $\mathcal{C}^p$ manifold is abstract-definable $\mathcal{C}^p$ embedded into $R^n$, for some $n$.
\end{theorem}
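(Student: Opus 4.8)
The plan is to mimic the classical Whitney embedding strategy, adapting it to the abstract-definable, non-Archimedean setting along the lines of Shiota \cite{shiota1986} and Kawakami \cite{kawakami2002}. Let $(M,\mathcal A)$ be an abstract-definable $\mathcal C^p$ manifold of dimension $m$. Since an abstract-definable atlas is by definition \emph{finite}, write $\mathcal A=\{\phi_i\colon U_i\to V_i\subseteq R^m\}_{i=1}^k$ with each $V_i$ open and definable. First I would shrink the cover: using definable choice and o-minimal cell decomposition in each $V_i$, produce definable open sets $W_i'\subseteq \overline{W_i'}\subseteq W_i\subseteq \overline{W_i}\subseteq V_i$ (closures taken in $R^m$, and arranged to be definable and ``bounded away'' from $\partial V_i$ in the sense of the order topology) such that the $\phi_i^{-1}(W_i')$ still cover $M$. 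The key local gadget is a definable $\mathcal C^p$ bump function: for each $i$ there is a definable $\mathcal C^p$ function $\lambda_i\colon R^m\to R$ with $\lambda_i\equiv 1$ on $\overline{W_i'}$, $\mathrm{supp}(\lambda_i)\subseteq W_i$, and $0\le\lambda_i\le 1$. Such functions exist for $1\le p<\infty$ in any o-minimal expansion of a real closed field (one builds them from definable $\mathcal C^p$ functions of one variable that are flat to order $p$ at an endpoint, e.g.\ suitable powers, and composes with a definable distance-like function for the box $W_i$); here is where $p<\infty$ is used, since a $\mathcal C^\infty$ definable bump need not exist.

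Next I would assemble the map. Define $\widetilde\lambda_i\colon M\to R$ by $\widetilde\lambda_i=\lambda_i\circ\phi_i$ on $U_i$ and $\widetilde\lambda_i=0$ off $\phi_i^{-1}(\mathrm{supp}\,\lambda_i)$; because $\mathrm{supp}\,\lambda_i$ is a definable compact-type (closed and bounded) subset of $V_i$, these two definitions agree on the overlap and $\widetilde\lambda_i$ is a well-defined abstract-definable $\mathcal C^p$ function on $M$. Similarly let $\psi_i\colon M\to R^m$ be $\widetilde\lambda_i\cdot(\phi_i)$ extended by $0$, again abstract-definable $\mathcal C^p$. Now set
\[
F=(\psi_1,\dots,\psi_k,\widetilde\lambda_1,\dots,\widetilde\lambda_k)\colon M\longrightarrow R^{k(m+1)}=R^N.
\]
Each component is abstract-definable $\mathcal C^p$, hence $F$ is an abstract-definable $\mathcal C^p$ map. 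I claim $F$ is an immersion: near a point $x$ choose $i$ with $x\in\phi_i^{-1}(W_i')$, so on that neighbourhood $\widetilde\lambda_i\equiv1$ and $\psi_i=\phi_i$, whence the block $(\psi_i)$ of $d_xF$ equals $d_x\phi_i=\Phi_x$, which is a linear isomorphism; therefore $d_xF$ is injective. For injectivity of $F$ itself: if $F(x)=F(y)$, pick $i$ with $\widetilde\lambda_i(x)=1$; then $\widetilde\lambda_i(y)=1$ forces $y\in\phi_i^{-1}(\overline{W_i'})\subseteq U_i$, and $\psi_i(x)=\psi_i(y)$ with $\widetilde\lambda_i\equiv1$ there gives $\phi_i(x)=\phi_i(y)$, so $x=y$.

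It remains to upgrade the injective immersion $F$ to an embedding, i.e.\ to check that $F$ is a homeomorphism onto its image $F(M)\subseteq R^N$ with the subspace topology. This is the step I expect to be the main obstacle, since in the non-Archimedean setting one cannot invoke compactness to conclude that a continuous bijection is a homeomorphism, and the manifold topology, though Hausdorff by hypothesis, has no countable base. The approach is local and definable: $F$ restricted to $\phi_i^{-1}(W_i')$ is, in the chart $\phi_i$, just the inclusion $W_i'\hookrightarrow R^m$ followed by the product embedding into $R^N$, hence a homeomorphism onto a definable subset of $R^N$; one then checks that the images $F(\phi_i^{-1}(W_i'))$ are \emph{relatively open} in $F(M)$, using the coordinate functions $\widetilde\lambda_i$ to separate the pieces (the set where $\widetilde\lambda_i>1/2$ is open in $F(M)$ and contained in $F(U_i)$), and that on overlaps the local inverses agree. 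Patching these local homeomorphisms, which is legitimate because $F$ is injective, yields a continuous inverse $F(M)\to M$, so $F$ is an abstract-definable $\mathcal C^p$ embedding into $R^N$. Finally, since $F(M)$ is then a definable subset of $R^N$ definably $\mathcal C^p$ diffeomorphic to $M$, we may replace $N$ by $n=2m+1$ if desired by composing with a generic definable linear projection, as in \cite{kawakami2002}, though this refinement is not needed for the statement as given.
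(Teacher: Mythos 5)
Your overall strategy is the classical Whitney one (shrink the cover, multiply the chart maps by definable $\mathcal{C}^p$ bump functions, concatenate), and the bump-function construction and the injective-immersion verification are fine as far as they go. The genuine gap is the very first step: the shrinking of the cover you require does not exist in general. You ask for definable open sets $W_i'$ with $\overline{W_i'}\subseteq W_i\subseteq \overline{W_i}\subseteq V_i$, the closures taken in $R^m$, such that the $\phi_i^{-1}(W_i')$ still cover $M$. Because an abstract-definable atlas is by definition \emph{finite} and $M$ need not be definably compact, this fails already for $M=\,]0,1[\,\subseteq R$ with the two identity charts $U_1=\,]0,2/3[$ and $U_2=\,]1/3,1[$: any definable open $W_1'$ with $\overline{W_1'}\subseteq\,]0,2/3[$ has $\inf W_1'>0$, and likewise $\sup W_2'<1$, so points of $M$ near $0$ and near $1$ lie in neither $\phi_1^{-1}(W_1')$ nor $\phi_2^{-1}(W_2')$. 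The standard shrinking lemma for normal spaces only yields $\overline{W_i'}^{\,M}\subseteq U_i$ with the closure taken \emph{in $M$}, which is strictly weaker than what you need: it is precisely the $R^m$-closure condition that allows $\lambda_i$ to be supported away from the frontier of $V_i$ and hence allows $\widetilde{\lambda}_i$ to extend by $0$ continuously (let alone $\mathcal{C}^p$) across the frontier of $U_i$ in $M$. Without it, $\widetilde{\lambda}_i$ is not even well defined as a continuous function. As written, your argument proves the theorem only for definably compact $M$, which is the case already settled by Berarducci and Otero.

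The paper circumvents this obstruction by a different device, due to Shiota. One chooses a definable $\mathcal{C}^p$ function $\phi_i$ vanishing exactly on $\overline{h_i(U_i)}\setminus h_i(U_i)$ and replaces the chart $h_i$ by $x\mapsto (h_i(x),1/\phi_i(h_i(x)))$; the image is then closed in $R^{m+1}$ and the map is definably proper. Composing with the inverse stereographic projection compresses the entire frontier of the chart image to the single north pole $N$, and a further polynomial flattening $\psi$ makes the extension of the resulting map by the constant value $N$ on $M\setminus U_i$ of class $\mathcal{C}^p$. This frontier-to-a-point construction is exactly what replaces your bump functions and makes the non-compact, finite-atlas case go through; you would need to incorporate it (or an exhaustion argument in the style of Kawakami) to close the gap.
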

\begin{proof}
Let $M$ be an abstract definable $\mathcal{C}^p$ manifold, with $\mathcal{C}^p$ atlas $\{h_i\colon U_i\to h_i(U_i)\,|\, i=1,\ldots,k\}$. By Proposition 4.22 (\cite{vandendries-miller1996}), for each $i\in I$, there is a definable $\mathcal{C}^p$ function $\phi_i\colon R^m\to R$ such that $\phi_i^{-1}(0)=\overline{h_i(U_i)}\setminus h_i(U_i)$. Define the map $h'_i\colon U_i\to R^{m+1}$ to be the rule 
$$
x\mapsto (h_i(x),1/\phi_i(h_i(x)))
$$
Note that the image $\text{Im}\,h'_i$ of $h'_i$ is the graph of $1/\phi_i$ restricted to $h_i(U_i)$. In particular, $\text{Im}\, h_i'$ is definable.
\\

\noindent\textbf{Claim 1}. $\text{Im}\, h'_i$ is a definable closed subset of $R^{m+1}$.
\\

\noindent\textit{Proof of Claim 1}. Let $(a,b)$ be an arbitrary point in $R^{m+1}\setminus \text{Im}\, h'_i$. Thus, we have two cases: $a\in h_i(U_i)$ and $a\not\in h_i(U_i)$. Suppose first $a\not\in h_i(U_i)$. If $a\in \overline{h_i(U_i)}\setminus h_i(U_i)$, then $\phi_i(a)=0$. If $b=0$, then by taking $N\mathrel{\mathop:}=\max\{|\phi_i(x)|\,:\,x\in \overline{h_i(U_i)}\}>0$ (recall that $\phi_i$ is definable continuous!) and a small neighborhood $V$ of $a$, it follows that $V\times ]-N,N[$ is an open set containing $(a,b)$ disjoint from the graph of $1/\phi_i$. On the other hand, if $b\neq 0$ then by the continuity of $\phi_i$ there is a neighborhood $V$ of $a$ such that $|\phi_i|_V|<1/\epsilon$ for $\epsilon>|b|$. Hence, $V\times ]-\epsilon,\epsilon[$ is a neighborhood of $(a,b)$ contained in $R^{m+1}\setminus \text{Im}\, h'_i$. Now, assume that $a\not \in \overline{h_i(U_i)}$. Then, $V\times R$ is a neighborhood of $(a,b)$ disjoint from the graph of $1/\phi_i$, where $V\mathrel{\mathop:}=R^m\setminus \overline{h_i(U_i)}$. Suppose now $a\in h_i(U_i)$. Since $R^m$ is definably regular, there is a definable open set $V$ containing $a$ with $\overline{V}\subseteq h_i(U_i)$. Because the graph of $(1/\phi_i)|_{\overline{V}}$ is closed in $R^{m+1}$ and does not contain $(a,b)$, there is an open subset $W\subseteq R$ such that, shrinking $V$ if necessary, $V\times W$ does not intersect $\text{Im}\, h_i'$.
\\

\noindent\textbf{Claim 2}. $h_i'$ is definably proper.
\\

\noindent\textit{Proof of Claim 2}. 
It suffices to prove that for any definably compact nonempty subset $K\subseteq R^{m+1}$, and any abstract-definable continuous curve $\gamma\colon ]a,b[\to U_i$ contained in $h_i'^{-1}(K)$, the limits $\lim_{t \to a^+}\gamma(t)$ and $\lim_{t\to b^-}\gamma(t)$ belong to $h_i'^{-1}(K)$. Indeed, for any abstract-definable continuous curve $\gamma\colon ]a,b[\to U_i$ contained in $h_i'^{-1}(K)$, $h'_i\circ \gamma\colon ]a,b[\to R^{m+1}$ is an abstract-definable continuous curve contained in $\text{Im}\, h_i'\cap K$. By hypothesis, $L_1\mathrel{\mathop:}=\lim_{t\to a^+}(h_i'\circ \gamma)(t),L_2\mathrel{\mathop:}=\lim_{t\to b^-}(h_i'\circ \gamma)(t)\in K$, and in view of Claim 1 both $L_1$ and $L_2$ are in $\text{Im}\, h'_i\cap K$. Hence, the limits $\lim_{t\to a^+}\gamma(t)=h_i'^{-1}(L_1)$ and $\lim_{t\to b^-} \gamma(t)=h_i'^{-1}(L_2)$ belong to $h_i'^{-1}(K)$.
\\

\noindent\textbf{Claim 3}. $h_i'$ is an abstract-definable $\mathcal{C}^p$ embedding from $U_i$ into $R^{m+1}$.
\\

\noindent\textit{Proof of Claim 3}. Since $h_i$ is an abstract-definable $\mathcal{C}^p$ diffeomorphism, $h_i'$ is an abstract-definable $\mathcal{C}^p$ immersion. The fact that $h_i'$ is a homeomorphism from $U_i$ onto $h_i'(U_i)$ follows from being the composition of two homeomorphisms, namely $h_i$ and $y\mapsto (y,1/\phi_i(y))\colon h_i(U_i)\to \text{Graph}(1/\phi_i)$. 
\\

Denote by $\pi$ the stereographic projection from $S^{m+1}\setminus \{N\}$ onto $R^{m+1}$, where $N$ stands for the north pole in $S^{m+1}$. Since $\pi$ is a definable $\mathcal{C}^p$ diffeomorphism, the map $h_i''\colon U_i\to R^{m+2}$, given by 
$$
h''_i\mathrel{\mathop:}= \pi^{-1}\circ h'_i,
$$
is an abstract-definable $\mathcal{C}^p$ (definably proper) embedding, whose image is bounded and such that $\overline{h''_i(U_i)}\setminus h''_i(U_i)=\{N\}$. To see the last assertion, first note that $||h'_i||$ is unbounded. Consequently, $N\in \overline{(\pi^{-1}\circ h'_i)(U_i)}$. Moreover, if towards a contradiction there exists a point $P$ in $\overline{h''_i(U_i)}\setminus h''_i(U_i)$ distinct from $N$, then $P$ lies in the domain of $\pi$, and $\pi(P)\in \overline{h'_i(U_i)}=h_i(U_i)$. Thus, $P\in \pi^{-1}(h'_i(U_i))=h''_i(U_i)$, contradicting the assumption.

Let $\psi\colon R^{m+2}\to R^{m+2}$ be the map given by 
$$
\psi(r_1,\ldots,r_{m+2})\mathrel{\mathop:}=\sum\limits_{j=1}^{m+2} r_j^{2l}(r_1,\ldots,r_{m+2}),
$$
for a sufficiently large $l$ and define $g_i\colon U_i\to R^{m+2}$ by 
$$
g_i\mathrel{\mathop:}= \psi\circ h''_i.
$$
The map $g_i$ has the same properties as $h_i''$.

We now extend $g_i$ to $M$ by the north pole, that is, let $\widetilde{g}_i\colon M\to R^{m+2}$ be given by the rule
\[
    \widetilde{g}_i\mathrel{\mathop:}= 
\begin{cases}
    g_i& \text{in } U_i\\
    N&  \text{in } M\setminus U_i
\end{cases}
\]
It is not hard to see that $\widetilde{g}_i$ is abstract-definable $\mathcal{C}^p$, and consequently so is the map $g\colon M\to R^{k(m+2)}$ given by
$$
g\mathrel{\mathop:}=(\widetilde{g}_1,\ldots,\widetilde{g}_k).
$$

\end{proof}

\section{Smoothing of Abstract Definable Manifolds} 

\begin{lemma}[Definable local immersion theorem]\label{1s}
Let $f\colon U\to R^{m+n}$ be a definable $\mathcal{C}^p$ map, where $U$ is a definable open subset of $R^m$ and $p\geq 1$. Suppose the differential $d_af\colon R^m\to R^{m+n}$ of $f$ at $a\in U$ is one-to-one. Then, there exist definable open subsets $V\subseteq U$, $W\subseteq R^n$ and $Z\subseteq R^{m+n}$ with $a\in V$, $0\in W$ and $f(a)\in Z$, and a definable $\mathcal{C}^p$ diffeomorphism $h\colon Z\to V\times W$ such that $(h\circ f)(r_1\ldots,r_m)=(r_1,\ldots,r_m,0,\ldots,0)$ for all $(r_1,\ldots,r_m)\in V$.
\end{lemma}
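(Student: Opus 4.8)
The plan is to deduce the lemma from the definable inverse function theorem by the standard device of ``thickening'' $f$ in the $n$ complementary directions. First I would exploit the injectivity of $d_af$: the Jacobian matrix of $f$ at $a$ is $(m+n)\times m$ of rank $m$, so $m$ of its rows are linearly independent. Let $P\colon R^{m+n}\to R^{m+n}$ be the coordinate permutation (a linear, hence definable $\mathcal{C}^\infty$, automorphism) that moves these rows to the first $m$ positions, and write $P\circ f=(g,\ell)$, where $g\colon U\to R^m$ collects the first $m$ components and $\ell\colon U\to R^n$ the last $n$. Then $d_ag\colon R^m\to R^m$ is invertible. It suffices to prove the lemma for $P\circ f$: if $\widetilde h\colon\widetilde Z\to V\times W$ works for $P\circ f$, then $h\mathrel{\mathop:}=\widetilde h\circ P$ on $Z\mathrel{\mathop:}=P^{-1}(\widetilde Z)$ works for $f$, since $h$ is again a definable $\mathcal{C}^p$ diffeomorphism, $f(a)\in Z$, and $h\circ f=\widetilde h\circ(P\circ f)$. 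So from now on I assume $f=(g,\ell)$ with $d_ag$ invertible.

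Next I would introduce $F\colon U\times R^n\to R^{m+n}$, $F(x,y)\mathrel{\mathop:}=(g(x),\,\ell(x)+y)$. This map is definable and $\mathcal{C}^p$, it satisfies $F(x,0)=f(x)$ for all $x\in U$, and its differential at $(a,0)$ is, in block form, lower triangular with diagonal blocks $d_ag$ and $I_n$; hence $d_{(a,0)}F$ is invertible. By the definable inverse function theorem (see \cite{vddries98}, and cf.\ \cite{vandendries-miller1996}), there are a definable open $\Omega\subseteq U\times R^n$ with $(a,0)\in\Omega$ and a definable open $Z\subseteq R^{m+n}$ with $f(a)=F(a,0)\in Z$ such that $F|_\Omega\colon\Omega\to Z$ is a definable $\mathcal{C}^p$ diffeomorphism. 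Shrinking $\Omega$ to a definable product neighbourhood, I may take $\Omega=V\times W$ with $V\subseteq U$ and $W\subseteq R^n$ definable open, $a\in V$, $0\in W$, and then replace $Z$ by the definable open set $F(V\times W)$.

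Finally, put $h\mathrel{\mathop:}=(F|_{V\times W})^{-1}\colon Z\to V\times W$, a definable $\mathcal{C}^p$ diffeomorphism. For any $(r_1,\dots,r_m)\in V$ one has $f(r_1,\dots,r_m)=F\bigl((r_1,\dots,r_m),0\bigr)\in Z$, and therefore $(h\circ f)(r_1,\dots,r_m)=(F|_{V\times W})^{-1}\bigl(F((r_1,\dots,r_m),0)\bigr)=(r_1,\dots,r_m,0,\dots,0)$, which is exactly the asserted normal form; undoing the permutation $P$ as above yields the lemma in general.

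The only substantive ingredient is the definable inverse function theorem, which is where the hypothesis $p\ge 1$ enters; everything else — the rank-and-permutation reduction, the verification that $d_{(a,0)}F$ is invertible, and the passage to a product neighbourhood — is routine bookkeeping. The single point to keep an eye on is that every step stays inside the definable category, but this is immediate, since coordinate permutations, the auxiliary map $F$, restrictions, inverses of definable $\mathcal{C}^p$ diffeomorphisms, and images of definable open sets under them are all definable.
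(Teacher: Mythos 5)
Your proposal is correct and follows essentially the same route as the paper: thicken $f$ to a map on $U\times R^n$ whose differential at $(a,0)$ is invertible, apply the definable inverse function theorem, shrink to a product neighbourhood, and take $h$ to be the inverse. The only cosmetic difference is that you realize the complementary $n$ directions by an explicit coordinate permutation, whereas the paper adds $\sum_i y_i v_i$ for a basis $\{v_i\}$ of an abstract complement of $d_af(R^m)$; your construction is the special case $v_i=e_{m+i}$ after permuting.
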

\begin{proof}
Denote by $E$ the vector subspace $d_af(R^m)$ of $R^{m+n}$, and choose a vector subspace $F$ such that $R^{m+n}=E\oplus F$. Given a basis $\{v_1,\ldots,v_n\}$ for $F$, take $\widetilde{f}\colon U\times R^n\to R^{m+n}$ to be the map defined as 
$$
\widetilde{f}(x,y)\mathrel{\mathop:}=f(x)+\sum_{i=1}^ny_iv_i,
$$ 
where $y=(y_1,\ldots,y_n)$. Clearly, $\widetilde{f}$ is a definable $\mathcal{C}^p$ map. Moreover, the linear map $d_{(a,0)}\widetilde{f}\colon R^m\times R^n\to E\oplus F$ is given by the rule 
$$
d_{(a,0)}\widetilde{f}(u,w)=d_af(u)+\sum_{i=1}^nw_iv_i,
$$ 
where $u\in R^m$ and $w=(w_1,\ldots,w_n)\in R^n$, thereby is injective, and ultimately is a linear isomorphism. From the definable inverse function theorem (see for instance Theorem 7.2.11, \cite{vddries98}) it follows that there exist definable open neighborhoods $\Omega\subseteq U\times R^n$ of $(a,0)$ and $\Omega'\subseteq R^{m+n}$ of $\widetilde{f}(a,0)$ such that $\widetilde{f}\colon \Omega\to \Omega'$ is a definable $\mathcal{C}^p$ diffeomorphism. By recalling that the topology on $U\times R^n$ is the product topology, we may pick definable open sets $V\subseteq U$ and $W\subseteq R^n$ with $(a,0)\in V\times W\subseteq \Omega$. If we put $Z\mathrel{\mathop:}=\widetilde{f}(V\times W)$ and $h\mathrel{\mathop:}=\widetilde{f}^{-1}\colon Z\to V\times W$, the result thus follows.
\end{proof}

\begin{lemma}[Local immersion theorem for abstract definable manifolds]\label{2s}
Let $M$ and $N$ be abstract definable $\mathcal{C}^p$ manifolds of dimension $m$ and $n$, respectively,  and let $f\colon M\to N$ be an abstract definable $\mathcal{C}^p$ immersion. Then for any point $x\in M$ there exist charts $(\phi,U)$ over $M$ and $(\psi,V)$ over $N$, with $x\in U$ and $f(U)\subseteq V$, such that  
$$
(\psi\circ f \circ \phi^{-1})(r_1,\ldots,r_m)=(r_1,\ldots,r_m,0,\ldots,0)\in R^n
$$ 
for all $(r_1,\ldots,r_m)\in \phi(U)$.   
\end{lemma}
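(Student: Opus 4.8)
The plan is to reduce the statement to the Euclidean normal form of Lemma~\ref{1s} by expressing $f$ in arbitrary charts around $x$ and $f(x)$ and then transporting that normal form back through the charts. Fix $x\in M$. Since $f$ is abstract-definable $\mathcal{C}^p$ it is continuous for the manifold topologies; so, choosing any chart $(\psi_0,V_0)$ on $N$ with $f(x)\in V_0$, the set $f^{-1}(V_0)$ is an open neighbourhood of $x$, and we may pick a chart $(\phi_0,U_0)$ on $M$ with $x\in U_0\subseteq f^{-1}(V_0)$, so that $f(U_0)\subseteq V_0$. Put $a:=\phi_0(x)$ and consider the coordinate representation
$$
F:=\psi_0\circ f\circ\phi_0^{-1}\colon \phi_0(U_0)\longrightarrow \psi_0(V_0)\subseteq R^n,
$$
a definable $\mathcal{C}^p$ map between definable open subsets of $R^m$ and $R^n$.

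The first thing to check is that $d_aF\colon R^m\to R^n$ is one-to-one. By the chain rule for differentials of abstract-definable $\mathcal{C}^p$ maps, $d_aF=d_{f(x)}\psi_0\circ d_xf\circ(d_x\phi_0)^{-1}$; since $d_x\phi_0\colon T_xM\to R^m$ and $d_{f(x)}\psi_0\colon T_{f(x)}N\to R^n$ are linear isomorphisms (as recalled in the preliminary section) and $d_xf$ is injective by hypothesis, $d_aF$ is injective. In particular $m\le n$, so $n-m\ge 0$ and Lemma~\ref{1s} applies to $F$ at $a$, with the role of ``$n$'' there played by the codimension $n-m$: there are definable open sets $V'\subseteq\phi_0(U_0)$, $W\subseteq R^{n-m}$ and $Z\subseteq R^n$ with $a\in V'$, $0\in W$, $F(a)\in Z$, and a definable $\mathcal{C}^p$ diffeomorphism $h\colon Z\to V'\times W$ with $(h\circ F)(r_1,\ldots,r_m)=(r_1,\ldots,r_m,0,\ldots,0)$ for all $(r_1,\ldots,r_m)\in V'$. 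As in the proof of Lemma~\ref{1s} (where $h=\widetilde{f}^{-1}$ and $Z=\widetilde{f}(V'\times W)$) this already forces $F(V')\subseteq Z$.

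It remains to pull everything back. Set $U:=\phi_0^{-1}(V')$; this is open in $M$, contains $x$, and $\phi:=\phi_0|_U$ is again a chart on $M$ at $x$ (the restriction of $\phi_0$ to the definable open set $U$), with $\phi(U)=V'$. Set $V:=\psi_0^{-1}(Z)$ and $\psi:=h\circ(\psi_0|_V)$; this is a chart on $N$, being $\psi_0|_V$ post-composed with the definable $\mathcal{C}^p$ diffeomorphism $h$. For $y\in U$, writing $y=\phi_0^{-1}(r)$ with $r\in V'$, we get $\psi_0(f(y))=F(r)\in F(V')\subseteq Z$, hence $f(y)\in V$; thus $f(U)\subseteq V$ and $U\cap f^{-1}(V)=U$. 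Finally, for every $r=(r_1,\ldots,r_m)\in\phi(U)=V'$,
$$
(\psi\circ f\circ\phi^{-1})(r)=h\bigl(F(r)\bigr)=(r_1,\ldots,r_m,0,\ldots,0)\in R^n,
$$
which is the asserted normal form.

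The essential ingredient is Lemma~\ref{1s}; everything else is bookkeeping, and the only steps needing a little care are keeping track of the dimension shift $n\mapsto n-m$ when invoking that lemma, verifying the injectivity of $d_aF$ through the chain rule and the chart identifications of the tangent spaces, and ensuring $f(U)\subseteq V$ (equivalently $F(V')\subseteq Z$) so that $\psi$ is defined on all of $f(U)$ — this last point, which I expect to be the most delicate, is where one uses the explicit form $h=\widetilde{f}^{-1}$ furnished by the proof of Lemma~\ref{1s}.
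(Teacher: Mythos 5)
Your proof is correct and follows essentially the same route as the paper's: shrink the domain chart so that $f(U_0)\subseteq V_0$, express $f$ in coordinates, check injectivity of the differential via the chain rule, invoke Lemma~\ref{1s}, and transport the resulting straightening diffeomorphism $h$ back into a new chart $\psi$ on $N$. Your extra care about the dimension shift $n\mapsto n-m$ when citing Lemma~\ref{1s} and about verifying $F(V')\subseteq Z$ (so that $f(U)\subseteq V$) addresses points the paper leaves implicit.
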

\begin{proof}
Let $(\phi_1,U_1)$ be an arbitrary chart on $M$ with $x\in U_1$ and and $(\psi_1, V_1)$ a chart on $N$ with $f(x)\in V_1$. Since $f_1$ is abstract definable continuous, $U_2\mathrel{\mathop:}=U_1\cap f^{-1}(V_1)$ is an abstract definable neighborhood of $x$ with $f(U_2)\subseteq V_1$. Moreover, the restriction $\phi_2\mathrel{\mathop:}=\phi_1|_{U_2}\colon U_2\to \phi_1(U_2)$ is a chart, which is compatible with the abstract definable $\mathcal{C}^p$ atlas on $M$. By definition, the representation $\widetilde{f}\mathrel{\mathop:}=\psi_1\circ f\circ \phi_2^{-1}$ of $f$ with respect to the charts $\phi_2$ and $\psi_1$ is a definable  $\mathcal{C}^p$ map. From the chain rule and the fact that $d_xf$ is injective, it follows that the linear map $d_{\phi_2(x)}\widetilde{f}=d_{f(x)}\psi_1\circ d_xf\circ d_{\phi_2(x)}\phi_2^{-1}$ is also injective. By Lemma \ref{1s}, there exists a definable open set $\widetilde{U}\subseteq \phi_2(U_2)$ containing $\phi_2(x)$ and a definable $\mathcal{C}^p$ diffeomorphism $h\colon Z\to Z'$ where $Z,Z'\subseteq R^n$ are definable open sets with $\widetilde{f}(\widetilde{U})\subseteq Z$ such that 
$$
h(\widetilde{f}(r_1,\ldots,r_m))=(r_1,\ldots,r_m,0,\ldots,0)\in R^n
$$
for all $(r_1,\ldots,r_m)\in \widetilde{U}$. Now, take $U$ to be the abstract definable open subset $\phi_2^{-1}(\widetilde{U})\subseteq U_2$, $\phi$ to be the map $\phi_2|_U\colon U\to \widetilde{U}$, $V$ to be the abstract definable open subset $\psi_1^{-1}(Z\cap \psi_1(V_1))\subseteq V_1$ and $\psi$ to be the map $(h\circ \psi_1)|_V\colon V\to Z\cap \psi_1(V_1)$. The proof is done by noticing that $h\circ \widetilde{f}=\psi\circ f\circ \phi$ on $\phi(U)$.
\end{proof}

A definable set $X\subseteq R^n$ is called a \textit{definable $\mathcal{C}^p$ submanifold of dimension $m$} of $R^n$ if for every point $x\in X$ there exist definable open sets $U,V\subseteq R^n$, with $x\in V$ and $0\in U$, and a definable $\mathcal{C}^p$ diffeomorphism $\phi\colon U\to V$ such that $\phi(0)=x$ and $\phi(R^m\cap U)=V\cap X$. By replace ``semialgebraic'' with ``definable'', and ``Nash submanifold'' with ``definable $\mathcal{C}^p$ submanifold'' in Corollary 9.3.10 (\cite{bochnakcosteroy1998}, p. 227), it follows that $X$ has a definable $\mathcal{C}^p$ atlas. Therefore, $X$ is an abstract definable $\mathcal{C}^p$ manifold of dimension $m$. 

\begin{lemma}\label{3s}
Let $M$ be an abstract definable $\mathcal{C}^p$ manifold of dimension $m$ and let $f\colon M\to R^n$ be an abstract definable $\mathcal{C}^p$ embedding. Then $f(M)$ is a definable $\mathcal{C}^p$ submanifold of $R^q$ of dimension $q$ and the map $f\colon M\to f(M)$ is an abstract definable $\mathcal{C}^p$ diffeomorphism.
\end{lemma}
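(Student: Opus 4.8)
The plan is to derive everything from the local normal form of Lemma \ref{2s} together with the fact that an embedding is, by definition, a homeomorphism onto its image. (I read the conclusion in the natural way: $f(M)$ is a definable $\mathcal{C}^p$ submanifold of $R^n$ of dimension $m$, in the sense of the paragraph preceding the lemma, and $f\colon M\to f(M)$ is an abstract definable $\mathcal{C}^p$ diffeomorphism for the induced structure on $f(M)$.) First I would record that $f(M)$ is a definable subset of $R^n$: if $\{h_i\colon U_i\to h_i(U_i)\}_{i=1}^{k}$ is a finite atlas on $M$, then each $f\circ h_i^{-1}\colon h_i(U_i)\to R^n$ is the coordinate representation of the abstract definable map $f$ with respect to $h_i$ and the identity chart of $R^n$, hence definable, so $f(U_i)=\bigl(f\circ h_i^{-1}\bigr)\bigl(h_i(U_i)\bigr)$ is definable and $f(M)=\bigcup_i f(U_i)$ is definable. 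The same argument shows that $f(U)$ is a definable subset of $R^n$ for every abstract definable open $U\subseteq M$.

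Next I would fix a point $x\in M$. Regarding $R^n$ as an abstract definable $\mathcal{C}^p$ manifold with the identity chart, Lemma \ref{2s} provides charts $(\phi,U)$ on $M$ with $x\in U$ and $(\psi,V)$ on $R^n$ with $f(U)\subseteq V$ such that
\[(\psi\circ f\circ\phi^{-1})(r)=(r,0,\dots,0)\in R^n\quad\text{for all }r\in\phi(U)\subseteq R^m.\]
Because $f$ is an embedding, $f(U)$ is open in $f(M)$ for the subspace topology; since $f(U)$ and $f(M)$ are definable, there is a definable open $W\subseteq R^n$ with $f(U)=f(M)\cap W$ --- e.g.\ $W=R^n\setminus\overline{f(M)\setminus f(U)}$. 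I would then replace $U$ by $U\cap f^{-1}(W)$ (still abstract definable open, since $f$ is abstract definable continuous and $W$ is definable), restrict $\phi$ accordingly, and replace $V$ by the definable open set $V_1\mathrel{\mathop:}=\{v\in V\cap W:\psi(v)\in\phi(U)\times R^{n-m}\}$. A short computation with the normal form then gives $f(x)\in V_1$, $f(M)\cap V_1=f(U)$, and $\psi(V_1)\cap(R^m\times\{0\})=\psi(f(U))=\phi(U)\times\{0\}$, so that $\psi\bigl(f(M)\cap V_1\bigr)=\psi(V_1)\cap(R^m\times\{0\})$. Composing $\psi|_{V_1}^{-1}$ with the translation of $R^n$ by $-\psi(f(x))$ (which preserves $R^m\times\{0\}$, as $\psi(f(x))\in R^m\times\{0\}$) yields a definable $\mathcal{C}^p$ diffeomorphism $g\colon U''\to V_1$, where $U''$ is a definable open neighbourhood of $0$, with $g(0)=f(x)$ and $g\bigl((R^m\times\{0\})\cap U''\bigr)=V_1\cap f(M)$. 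As $x$ was arbitrary, $f(M)$ is a definable $\mathcal{C}^p$ submanifold of $R^n$ of dimension $m$, and hence, by the paragraph before the lemma, carries a definable $\mathcal{C}^p$ atlas.

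For the last assertion, $f\colon M\to f(M)$ is a homeomorphism by hypothesis, so it remains to check that $f$ and $f^{-1}$ are abstract definable $\mathcal{C}^p$. The canonical charts on the submanifold $f(M)$ near $f(x)$ are, up to composition with a translation and the linear projection $R^n\to R^m$ onto the first $m$ coordinates, the maps $\psi|_{V_1\cap f(M)}$ constructed above; with respect to such a chart and the chart $(\phi,U)$ on $M$, the normal form shows that the coordinate representation of $f$ is a translation of an open subset of $R^m$, hence a definable $\mathcal{C}^p$ diffeomorphism, and the same holds for $f^{-1}$. Since these coordinate representations are independent of the choice of compatible atlases (by the chain rule), $f$ is an abstract definable $\mathcal{C}^p$ diffeomorphism onto $f(M)$.

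I expect the main difficulty to lie in the second paragraph: one has to intertwine the purely topological content of ``embedding'' (that $f(U)$ is open in $f(M)$, giving the set $W$) with the analytic normal form of Lemma \ref{2s}, and to shrink the various open sets in the correct order so that each remains definable and open while forcing the local picture of $f(M)$ to match the flat model $R^m\times\{0\}$ exactly. Everything else is routine chart bookkeeping.
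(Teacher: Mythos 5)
Your proposal is correct and follows exactly the route the paper indicates: it combines the local normal form of Lemma \ref{2s} with the fact that $f$ is a homeomorphism onto $f(M)$ (the latter supplying the definable open set $W$ that trims the normal-form neighbourhood down to a genuine submanifold chart). The paper dismisses all of this as ``straightforward,'' so your write-up is simply a detailed version of the same argument, including the correct reading of the statement's typos ($R^n$ and dimension $m$).
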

\begin{proof}
Straightforward from Lemma \ref{2s} and the fact that $f$ is a homeomorphism between $M$ and $f(M)$.
\end{proof}

\begin{lemma}[Inverse function theorem for abstract definable manifolds]\label{4s}
Let $f\colon M\to N$ be an abstract definable $\mathcal{C}^p$ map, where $M$ and $N$ are abstract definable $\mathcal{C}^p$ manifolds. If $x\in M$ is a point such that $d_x f$ is a linear isomorphism (in particular, $\dim M =\dim N$), then there exist abstract definable open neighborhoods $U_0$ of $x$ and $V_0$ of $f(x)$ such that $f|_{U_0}\colon U_0\to V_0$ is an abstract definable $\mathcal{C}^p$ diffeomorphism. 
\end{lemma}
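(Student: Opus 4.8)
The plan is to reduce the statement to the ordinary definable inverse function theorem in $R^m$ (Theorem 7.2.11 of \cite{vddries98}) by passing to charts, exactly as Lemma \ref{2s} reduced the immersion theorem to Lemma \ref{1s}. First I would choose a chart $(U_1,\phi_1)$ on $M$ with $x\in U_1$ and a chart $(V_1,\psi_1)$ on $N$ with $f(x)\in V_1$; since $f$ is abstract-definable (hence abstract-definable continuous), $U_2\mathrel{\mathop:}=U_1\cap f^{-1}(V_1)$ is an abstract-definable open neighborhood of $x$ with $f(U_2)\subseteq V_1$, and $\phi_2\mathrel{\mathop:}=\phi_1|_{U_2}$ is a chart compatible with the atlas of $M$. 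The coordinate representation $\widetilde{f}\mathrel{\mathop:}=\psi_1\circ f\circ \phi_2^{-1}\colon \phi_2(U_2)\to \psi_1(V_1)$ is then a definable $\mathcal{C}^p$ map between definable open subsets of $R^m$ and $R^n$; the hypothesis that $d_xf$ is a linear isomorphism forces $m=n$ (as already noted in the statement), and by the chain rule $d_{\phi_2(x)}\widetilde{f}=d_{f(x)}\psi_1\circ d_xf\circ d_{\phi_2(x)}\phi_2^{-1}$ is a composition of linear isomorphisms, hence itself a linear isomorphism.

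Next I would apply the definable inverse function theorem to $\widetilde{f}$ at the point $\phi_2(x)$: there are definable open sets $\widetilde{U}\subseteq \phi_2(U_2)$ containing $\phi_2(x)$ and $\widetilde{V}\subseteq \psi_1(V_1)$ containing $\widetilde{f}(\phi_2(x))=\psi_1(f(x))$ such that $\widetilde{f}|_{\widetilde{U}}\colon \widetilde{U}\to \widetilde{V}$ is a definable $\mathcal{C}^p$ diffeomorphism. Shrinking $\widetilde{V}$ to $\widetilde{V}\cap\psi_1(V_1)$ if necessary (it is already inside, but one must make sure the source set $\widetilde U$ is adjusted to $\widetilde f^{-1}(\widetilde V)$ so that it is exactly the diffeomorphic preimage), pull these back through the charts: set $U_0\mathrel{\mathop:}=\phi_2^{-1}(\widetilde{U})$ and $V_0\mathrel{\mathop:}=\psi_1^{-1}(\widetilde{V})$. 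These are abstract-definable open subsets of $M$ and $N$ respectively (being preimages of definable open sets under charts, which are homeomorphisms), with $x\in U_0$ and $f(x)\in V_0$, and $f(U_0)=V_0$ because $f|_{U_0}=\psi_1^{-1}\circ\widetilde{f}\circ\phi_2|_{U_0}$ is a composition of bijections onto their images.

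Finally I would verify that $f|_{U_0}\colon U_0\to V_0$ is an abstract-definable $\mathcal{C}^p$ diffeomorphism. It is a bijection by the previous paragraph; its coordinate representation with respect to the charts $\phi_2|_{U_0}$ and $\psi_1|_{V_0}$ is exactly $\widetilde{f}|_{\widetilde{U}}$, a definable $\mathcal{C}^p$ map, so $f|_{U_0}$ is abstract-definable $\mathcal{C}^p$; and the coordinate representation of its inverse with respect to the same charts is $(\widetilde{f}|_{\widetilde{U}})^{-1}$, which is definable $\mathcal{C}^p$ by the definable inverse function theorem, so $(f|_{U_0})^{-1}$ is abstract-definable $\mathcal{C}^p$ as well. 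Here one should note that $U_0$ and $V_0$, as abstract-definable open subsets of abstract-definable $\mathcal{C}^p$ manifolds, inherit abstract-definable $\mathcal{C}^p$ manifold structures via the restricted charts, so the notion of abstract-definable $\mathcal{C}^p$ diffeomorphism between them makes sense. The only mild subtlety — the main ``obstacle,'' though it is routine — is the bookkeeping needed to guarantee that the pulled-back neighborhoods match up as genuine domain and codomain of a diffeomorphism (i.e. choosing $\widetilde U=\widetilde f^{-1}(\widetilde V)$ rather than an arbitrary smaller neighborhood), so that $f|_{U_0}$ is surjective onto $V_0$ and not merely into it; everything else is a direct transport of the Euclidean statement through charts.
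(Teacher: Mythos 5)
Your proposal is correct and follows essentially the same route as the paper's proof: pass to charts to obtain the coordinate representation $\widetilde{f}=\psi_1\circ f\circ\phi_2^{-1}$, check via the chain rule that $d_{\phi_2(x)}\widetilde{f}$ is an isomorphism, apply the definable inverse function theorem (Theorem 7.2.11 of van den Dries), and transport the resulting neighborhoods back through the charts. Your extra remark about arranging $\widetilde U=\widetilde f^{-1}(\widetilde V)$ so that $f|_{U_0}$ is onto $V_0$ is a sensible bit of bookkeeping that the paper leaves implicit.
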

\begin{proof}
Since $f$ is particularly abstract definable $\mathcal{C}^0$, there exist charts $(U,\phi)$ on $M$ and $(V,\psi)$ on $N$ with $x\in U$ and $f(x)\in f(U)\subseteq V$ such that $\widetilde{f}\mathrel{\mathop:}=\psi \circ f\circ \phi^{-1}\colon \phi(U)\to \psi(V)$ is definable $\mathcal{C}^p$ (see the proof of Lemma \ref{2s}). From the chain rule and the fact that $d_x f$ is an isomorphism, it follows that the map $d_{\phi(x)}\widetilde{f}=d_{f(x)}\psi \circ d_x f\circ d_{\phi(x)} \phi^{-1}$ is a linear isomorphism on $R^m$ with $m\mathrel{\mathop:}=\dim M$. By virtue of the definable inverse function theorem (see for instance Theorem 7.2.11, \cite{vddries98}), there exist definable open neighborhoods $\widetilde{U}\subseteq \phi(U)$ of $\phi(x)$ and $\widetilde{V}\subseteq \psi(V)$ of $\widetilde{f}(\phi(x))$ such that $\widetilde{f}|_{\widetilde{U}}\colon \widetilde{U}\to \widetilde{V}$ is a definable $\mathcal{C}^p$ diffeomorphism. It thus suffices to put $U_0\mathrel{\mathop:}=\phi^{-1}(\widetilde{U})$ and $V_0\mathrel{\mathop:}=\psi^{-1}(\widetilde{V})$, and to observe that $f|_{U_0}\colon U_0\to V_0$ can be given by the composition of abstract definable $\mathcal{C}^p$ diffeomorphisms $\psi^{-1}|_{\widetilde{V}}\circ \widetilde{f}|_{\widetilde{U}}\circ \phi|_{U_0}$.
\end{proof}

\begin{lemma}[Theorem 1.11, \cite{escribano2002}]\label{5s}
For $p>0$, any definable $\mathcal{C}^p$ submanifold of $R^n$ is definably $\mathcal{C}^p$ diffeomorphic to a definable $\mathcal{C}^{p+1}$ submanifold. Two definably $\mathcal{C}^p$ diffeomorphic definable $\mathcal{C}^{p+1}$ submanifolds of $R^n$ are definably $\mathcal{C}^{p+1}$ diffeomorphic. 
\end{lemma}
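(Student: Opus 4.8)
The plan is to deduce both assertions from a single technical input — a definable $\mathcal{C}^p$ approximation theorem — combined with the inverse function theorem for abstract definable manifolds (Lemma~\ref{4s}). The approximation theorem I have in mind reads: if $U\subseteq R^m$ is definable and open, $f\colon U\to R^k$ is a definable $\mathcal{C}^p$ map and $\varepsilon\colon U\to R_{>0}$ is definable and continuous, then there is a definable $\mathcal{C}^{p+1}$ map $g\colon U\to R^k$ with $\lVert\partial^\alpha f(x)-\partial^\alpha g(x)\rVert<\varepsilon(x)$ for all $x\in U$ and all $\lvert\alpha\rvert\le p$. I would prove this first, applying definable $\mathcal{C}^{p+1}$ cell decomposition to the graph of $f$ and to the graphs of its partial derivatives of order $\le p$: on the open cells $f$ is already of class $\mathcal{C}^{p+1}$, and the task is to ``round off'' $f$ transversally to the lower-dimensional cells, performed in cell coordinates and reassembled with a definable $\mathcal{C}^{p+1}$ partition of unity. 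This is precisely where the absence of $\mathcal{C}^\infty$ tools in a general o-minimal structure limits the gain to one order of differentiability, and this step — together with the bookkeeping required to keep the reassembled objects globally consistent in the two applications below — is the main obstacle; it is the content of \cite{escribano2002}.

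For the first assertion, let $X\subseteq R^n$ be a definable $\mathcal{C}^p$ submanifold of dimension $m$. Being definable and locally closed, $X$ is closed in the definable open set $\Omega:=R^n\setminus(\overline X\setminus X)$. Using the ambient slice charts from the definition of a definable submanifold, a finite definable $\mathcal{C}^p$ atlas for $X$ (as recalled before Lemma~\ref{3s}), and a subordinate definable $\mathcal{C}^p$ partition of unity, I would realize $X$ locally as the zero set of a definable $\mathcal{C}^p$ submersion on a neighbourhood of $X$, i.e. exhibit a definable $\mathcal{C}^p$ map $F$ for which $X$ is a stable zero locus. Applying the approximation theorem to $F$ produces a definable $\mathcal{C}^{p+1}$ map $G$ that is $\mathcal{C}^1$-close to $F$; since being a submersion is an open condition, the zero locus of $G$ near $X$ is a definable $\mathcal{C}^{p+1}$ submanifold $X'$ of dimension $m$ that is $\mathcal{C}^1$-close to $X$. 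As $X'$ is of class $\mathcal{C}^{p+1}$, the nearest-point projection $\pi_{X'}$ onto $X'$ is a definable $\mathcal{C}^p$ map on a definable tubular neighbourhood of $X'$, so $\pi_{X'}|_X\colon X\to X'$ is definable and $\mathcal{C}^p$; by $\mathcal{C}^1$-closeness its differential is everywhere an isomorphism, and by $\mathcal{C}^0$-closeness to the identity together with the definable properness coming from $X$ being closed in $\Omega$ it is a bijection. Lemma~\ref{4s} then makes it an abstract-definable $\mathcal{C}^p$ diffeomorphism $X\to X'$, and since $X$ and $X'$ are affine this is a definable $\mathcal{C}^p$ diffeomorphism.

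For the second assertion, let $X_1,X_2\subseteq R^n$ be definable $\mathcal{C}^{p+1}$ submanifolds and $h\colon X_1\to X_2$ a definable $\mathcal{C}^p$ diffeomorphism. Reading $h$ in the finitely many slice charts of $X_1$ and $X_2$ and applying the approximation theorem chart by chart, interpolating successive approximants inside the slice charts of $X_2$ by a definable $\mathcal{C}^{p+1}$ partition of unity so that the outcome always takes values in $X_2$, I would obtain a definable $\mathcal{C}^{p+1}$ map $g\colon X_1\to X_2$ that is $\mathcal{C}^1$-close to $h$; morally this is ``approximate $h$ as a map into $R^n$, then push back onto $X_2$ along its $\mathcal{C}^{p+1}$ slice charts''. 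Since $h$ is a diffeomorphism, $g$ has everywhere bijective differential and, by $\mathcal{C}^0$-closeness plus definable properness, is a bijection, so Lemma~\ref{4s} promotes $g$ to an abstract-definable $\mathcal{C}^{p+1}$ diffeomorphism $X_1\to X_2$; as both manifolds are affine, this is a definable $\mathcal{C}^{p+1}$ diffeomorphism.
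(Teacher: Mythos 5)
The paper does not prove this lemma: it is imported verbatim as Theorem 1.11 of \cite{escribano2002}, so there is no internal proof to compare against, and your proposal has to be judged as a reconstruction of Escribano's argument. Its overall architecture --- a theorem on approximating definable $\mathcal{C}^p$ maps by definable $\mathcal{C}^{p+1}$ maps in the $\mathcal{C}^p$ topology, followed by ``perturb, project back along a tubular neighbourhood, and use openness of the diffeomorphism condition plus Lemma~\ref{4s}'' --- is indeed the standard route (going back to Shiota's Nash-manifold arguments), and your treatment of the second assertion is essentially right. But you explicitly defer the one genuinely hard ingredient, the approximation theorem itself, back to \cite{escribano2002}, so as written the proposal is closer to an annotated citation than to a self-contained proof.

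There is also a concrete gap in your argument for the first assertion. You want to exhibit $X$ as the zero locus of a single definable $\mathcal{C}^p$ submersion $F$ defined on a neighbourhood of $X$ in $\Omega$ and then perturb $F$. Such a global $F$ trivializes the normal bundle of $X$ (its differential furnishes $n-m$ pointwise independent conormal covectors along $X$), so it cannot exist when the normal bundle is non-trivial --- e.g.\ for an algebraic embedding of the projective plane in $R^4$ --- and patching local defining submersions with a partition of unity does not yield a submersion with the correct zero set, since the local pieces can cancel. The standard repair is to smooth $X$ chart by chart: cover $X$ by finitely many definable $\mathcal{C}^p$ parametrized pieces, approximate the parametrizations one at a time by definable $\mathcal{C}^{p+1}$ ones while keeping them fixed away from the piece being treated, and glue each new piece onto the already-smoothed part via the nearest-point retraction of a tubular neighbourhood of that part, which is a definable $\mathcal{C}^p$ map precisely because the smoothed part is already of class $\mathcal{C}^{p+1}$. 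Your use of $\mathcal{C}^1$-closeness and Lemma~\ref{4s} to upgrade a nearby map to a diffeomorphism is fine once that is in place.
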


\begin{theorem}\label{smo}
Any abstract definable $\mathcal{C}\sp{p}$ manifold has a compatible $\mathcal{C}\sp{p+1}$ atlas.
\end{theorem}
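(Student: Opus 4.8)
The plan is to move the problem into a Euclidean model of $M$, smooth that model with Escribano's theorem, and then transport the resulting atlas back. First I would use Theorem~\ref{wet} to fix an abstract-definable $\mathcal{C}^p$ embedding $f\colon M\to R^n$. By Lemma~\ref{3s}, its image $X\mathrel{\mathop:}=f(M)$ is a definable $\mathcal{C}^p$ submanifold of $R^n$ and $f\colon M\to X$ is an abstract-definable $\mathcal{C}^p$ diffeomorphism. Lemma~\ref{5s} then supplies a definable $\mathcal{C}^{p+1}$ submanifold $Y$ (of some $R^{n'}$) together with a definable $\mathcal{C}^p$ diffeomorphism $g\colon X\to Y$. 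Running the construction behind the remark that precedes Lemma~\ref{3s} (the $\mathcal{C}^{p+1}$ analogue of Corollary~9.3.10 of \cite{bochnakcosteroy1998}) on $Y$ equips it with a finite definable $\mathcal{C}^{p+1}$ atlas $\{\theta_j\colon W_j\to\theta_j(W_j)\}_{j}$, with $\dim Y=\dim M=m$.

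Next I would form the composite $\Phi\mathrel{\mathop:}= g\circ f\colon M\to Y$, an abstract-definable $\mathcal{C}^p$ diffeomorphism, and pull the atlas of $Y$ back along it, obtaining $\mathcal{B}\mathrel{\mathop:}=\{\theta_j\circ\Phi|_{\Phi^{-1}(W_j)}\colon \Phi^{-1}(W_j)\to\theta_j(W_j)\}_{j}$. The essential observation is that the transition maps of $\mathcal{B}$ are $(\theta_k\circ\Phi)\circ(\theta_j\circ\Phi)^{-1}=\theta_k\circ\theta_j^{-1}$, i.e.\ exactly the transition maps of the atlas on $Y$, hence definable $\mathcal{C}^{p+1}$ diffeomorphisms; the merely $\mathcal{C}^p$ regularity of $\Phi$ never enters. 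The remaining clauses are routine: $M=\bigcup_j\Phi^{-1}(W_j)$ since $\Phi$ is a bijection and $Y=\bigcup_j W_j$, and $(\theta_j\circ\Phi)\bigl(\Phi^{-1}(W_j)\cap\Phi^{-1}(W_k)\bigr)=\theta_j(W_j\cap W_k)$ is definable and open. Thus $\mathcal{B}$ is an abstract-definable $\mathcal{C}^{p+1}$ atlas on the set $M$.

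It then remains to check that $\mathcal{B}$, regarded as a $\mathcal{C}^p$ atlas, is compatible with the given atlas $\mathcal{A}=\{h_i\colon U_i\to h_i(U_i)\}_{i}$ of $M$, i.e.\ that $\mathcal{A}\cup\mathcal{B}$ is an abstract-definable $\mathcal{C}^p$ atlas; this is the only point that really uses properties of $f$ and $g$. I would verify it by identifying the mixed transition maps with coordinate representations of $\Phi$: the map $(\theta_j\circ\Phi)\circ h_i^{-1}=\theta_j\circ\Phi\circ h_i^{-1}$ is the representation of $\Phi$ in the charts $h_i$ and $\theta_j$, and $h_i\circ(\theta_j\circ\Phi)^{-1}=h_i\circ\Phi^{-1}\circ\theta_j^{-1}$ is the representation of $\Phi^{-1}$; both are definable and $\mathcal{C}^p$ because $\Phi$ and $\Phi^{-1}$ are abstract-definable $\mathcal{C}^p$, and their domains $h_i(U_i\cap\Phi^{-1}(W_j))$ and $\theta_j(\Phi(U_i)\cap W_j)$ are definable and open since $\Phi$ is an abstract-definable homeomorphism onto $Y$. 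Hence $\mathcal{A}\cup\mathcal{B}$ is an abstract-definable $\mathcal{C}^p$ atlas, so $\mathcal{B}\sim\mathcal{A}$; in particular $\mathcal{B}$ induces the same (Hausdorff) manifold topology on $M$, and so $\mathcal{B}$ is a compatible $\mathcal{C}^{p+1}$ atlas.

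I expect the main obstacle to be a bookkeeping one rather than a conceptual one: confirming that the passage from the $\mathcal{C}^p$ submanifold $X$ to the $\mathcal{C}^{p+1}$ submanifold $Y$ genuinely provides a definable $\mathcal{C}^{p+1}$ atlas on $Y$ (the cited corollary must be applied at regularity $p+1$), and keeping careful track throughout the compatibility step of which maps are honestly definable as opposed to merely abstract-definable. The one idea that makes the whole argument go through — that pulling a $\mathcal{C}^{p+1}$ atlas back along a $\mathcal{C}^p$ diffeomorphism still yields a $\mathcal{C}^{p+1}$ atlas, because the new transition maps are those of $Y$ — is elementary once it is isolated.
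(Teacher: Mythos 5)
Your proposal is correct and follows essentially the same route as the paper: embed $M$ into $R^n$ via Theorem~\ref{wet}, apply Lemma~\ref{3s} and Escribano's smoothing (Lemma~\ref{5s}) to obtain a definable $\mathcal{C}^{p+1}$ submanifold $\mathcal{C}^p$-diffeomorphic to the image, pull back its $\mathcal{C}^{p+1}$ atlas along the composite diffeomorphism, and verify $\mathcal{C}^p$-compatibility via the coordinate representations of that composite and its inverse. The only cosmetic difference is that where the paper invokes Lemma~\ref{4s} to see that the reverse mixed transition maps are $\mathcal{C}^p$, you use directly that $f\colon M\to f(M)$ is an abstract-definable $\mathcal{C}^p$ diffeomorphism (Lemma~\ref{3s}), which is equally valid.
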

\begin{proof}
In view of Theorem \ref{wet}, for any abstract definable $\mathcal{C}^p$ manifold $M$ of dimension $m$ there exists an abstract-definable $\mathcal{C}^p$ embedding $g\colon M\to R^n$, for some $n\in \mathbb{N}$. Since $g(M)$ is a definable $\mathcal{C}^p$ submanifold of $R^n$ of dimension $m$ (see Lemma \ref{3s}), we have a definable $\mathcal{C}^p$ diffeomorphism $f\colon g(M)\to N$ where $N$ is a definable $\mathcal{C}^{p+1}$ submanifold of $R^n$ of dimension $m$, by Lemma \ref{5s}. Pick a finite definable $\mathcal{C}^{p+1}$ atlas $\mathcal{B}$ over $N$. For each chart $(V,\psi)$ in $\mathcal{B}$, put $\widetilde{V}\mathrel{\mathop:}=(f\circ g)^{-1}(V)$ and $\widetilde{\psi}\mathrel{\mathop:}=\psi\circ f\circ g\colon \widetilde{V}\to \psi(V)\subseteq R^m$. It is not hard to see that $\widetilde{\mathcal{B}}\mathrel{\mathop:}=\{(\widetilde{V},\widetilde{\psi})\,:\, (V,\psi)\in \mathcal{B}\}$ is an abstract-definable $\mathcal{C}^{p+1}$ atlas on $M$ of dimension $m$. Moreover, given any chart $(U,\phi)$ in the initial abstract definable $\mathcal{C}^p$ atlas on $M$ and any chart $(\widetilde{V}, \widetilde{\psi})\in \widetilde{\mathcal{B}}$ with $U\cap \widetilde{V}\neq \emptyset$ it follows that on $\phi(U\cap \widetilde{V})$ the map $\widetilde{\psi}\circ \phi^{-1}=(\psi\circ f\circ g)\circ \phi^{-1}=(\psi\circ f)\circ (g\circ \phi^{-1})$ is definable $\mathcal{C}^p$. On the other hand, on $\widetilde{\psi}(U\cap \widetilde{V})$ the map $\phi\circ \widetilde{\psi}^{-1}=\phi\circ (g^{-1}\circ f^{-1}\circ \psi^{-1})=(\phi\circ g^{-1})\circ (f^{-1}\circ \psi^{-1})$ is definable, and since by Lemma \ref{4s}  the map $g$ is a local abstract definable $\mathcal{C}^p$ diffeomorphism, $\phi\circ \widetilde{\psi}^{-1}$ is also of class $\mathcal{C}^p$. Therefore, $\widetilde{\mathcal{B}}$ is $\mathcal{C}^p$-compatible with the fixed abstract definable $\mathcal{C}^p$ atlas on $M$.
\end{proof}

\end{document}